\documentclass{jgcc}

\pdfoutput=1

% jGCC Layouting Macros

\usepackage{lastpage}

\jgccdoi{16}{1}{1}{13493} 

% for volume 16, issue 1, 1st paper, permanent id (on episciences) 13493

\jgccheading{}{\pageref{LastPage}}{}{}{Mar.~14,~2024}{May.~13,~2024}{}   

%submitted; published

%\usepackage{a4wide}
\usepackage[english,russian]{babel}
\usepackage[T2A]{fontenc}
\usepackage[cp1251]{inputenc}
\usepackage{amsfonts}
\usepackage{amssymb, amsthm, amscd}
\usepackage{amsmath}
\usepackage{mathtools}
\usepackage{needspace}
\usepackage{etoolbox}
\usepackage{lipsum}
\usepackage{comment}
\usepackage{cmap}
\usepackage{euscript}
\usepackage{epigraph}
\usepackage[numbers]{natbib}
\usepackage{hyperref}

\newcommand{\Z}{\mathbb{Z}}
\newcommand{\N}{\mathbb{N}}

\newcommand{\Q}{\mathbb{Q}}

\renewcommand{\O}{\EuScript{O}}

\newcommand{\M}{\mathfrak{M}}

\newcommand{\ovl}{\overline}

\newcommand{\ph}{\varphi}

\newcommand{\sub}{\subseteq}

\renewcommand{\ge}{\geqslant}
\renewcommand{\le}{\leqslant}
\newcommand{\sm}{\setminus}

\DeclareMathOperator{\ac}{ac}
\DeclareMathOperator{\tp}{tp}
\DeclareMathOperator{\trdeg}{tr.deg}
\DeclareMathOperator{\dist}{dist}

\DeclareMathOperator{\SL}{SL}

%\makeatletter
%\def\@settitle{\begin{center}%
%    \baselineskip14\p@\relax
%    \bfseries
%    \@title
%  \end{center}%
%}

%\def\@evenhead{\hfil\sc Pavel Gvozdevsky\hfil}
%\def\@oddhead{\hfil\sc Isotypic structures\hfil}
%\makeatother

\def\le{\leqslant}

\def\ge{\geqslant}

\title{On countable isotypic structures.}

\keywords{Isotypic structures, fields, henselian fields, totally ordered sets}
\subjclass[2020]{03C07(Primary)	03C60, 03C64 and 12L12(Secondary)}

\author{Pavel Gvozdevsky}
\date{}
\address{Department of Mathematics, Bar-Ilan University, 5290002 Ramat Gan, ISRAEL}
\thanks{The paper is written as part of the author's post-doctoral fellowship at Bar-Ilan University, Department of Mathematics; and is supported by ISF grant 1994/20. }
\email{gvozdevskiy96@gmail.com}

\begin{document}
\selectlanguage{english}

\begin{abstract}
	We obtain several results concerning the concept of isotypic structures. Namely we prove that any field of finite transcendence degree over a prime subfield is defined by types; then we construct isotypic but not isomorphic structures with countable underlying sets: totally ordered sets, fields, and  groups. This answers an old question by B. Plotkin for groups.
\end{abstract}

\maketitle

\epigraph{To the memory of Ben Fine}

\section{Introduction}

The concept of isotypicity for structures  naturally arose within the framework of universal algebraic geometry and logical geometry \cite{Pl_7_lecturs}, \cite{MRem}, \cite{DMR}. In the paper \cite{PZh} it was formulated as a kind of Morita-type logical equivalence on algebras. Namely,

\begin{defi}  Algebras $H_1$ and $H_2$ are logically similar if the categories of
definable sets over $H_1$ and $H_2$  are isomorphic.\end{defi}

\smallskip

This definition is related to the following one formulated by B.Plotkin in \cite{Pl_Iso} for algebras:

\begin{defi} (see ~\cite{Pl_Iso}). Let $\mathcal L$ be a first order language and $A$, $B$ be $\mathcal L$- structures. Then $A$ and $B$ are isotypic if for any finite tuple $\bar a=(a_1,\ldots, a_k)$ over $A$ there exists a tuple $\bar b=(b_1,\ldots, b_k)$ over $B$ such that their types coincide, that is,  $\tp_A(\bar a)=\tp_B(\bar b)$, and vice versa, where for a $k$-tuple $\bar a$ we define $\tp_A(\bar a)$ as a set of all first order formulas in $k$ variables that hold true for $\bar a$.\end{defi}

Isotypic algebras are discussed in \cite{MyasRomTypes},\cite{Pl_Gagta}, \cite{PAP}, \cite{PP}, \cite{PZh}, \cite{Zh}. In particular, in this paper we follow the definition from \cite{MyasRomTypes}:

\begin{defi}(see~\cite{MyasRomTypes}) We say that a structure $A$ is {\it defined by types} if any structure $B$ isotypic to $A$ is isomorphic to $A$.\end{defi}

\smallskip

The major problem behind all the considerations which remain widely open is whether two finitely generated isotypic groups are isomorphic, see \cite{Pl_Iso}. Our paper can be viewed as a step towards the clarification of this problem. It also answers the question about existence of countable isotypic but not isomorphic groups. Note that it was recently found that there are such groups in any cardinality bigger than contable, see~\cite[Remark 4.1]{Bunina-isotypic}.

The paper is structured as follows. In Section~\ref{FieldsSec} we prove that any field of finite transcendence degree over a prime subfield is defined by types. In Section~\ref{OrdersSec} we give an example of countable isotypic but not isomorphic totally ordered sets. In Section~\ref{FieldsGroupsSec} we construct countable isotypic but not isomorphic fields and groups.

\section*{Acknowledgment}

I am grateful to the anonymous reviewer of the first version of this paper, who pointed out references that allow to simplify the proofs, and suggested a way to extend one of the results from rings to fields. 

I am also thankful to ISF grant 1994/20 for the support of the paper.

\section{Isotypic fields of finite transcendence degree are isomorphic}
\label{FieldsSec}

In this Section we prove the following theorem.

\begin{thm}
 Any field of finite transcendence degree over a prime subfield is defined by types.
\end{thm}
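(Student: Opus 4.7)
The plan is to show that both $K$ and any isotypic structure $L$ are countable $\omega$-homogeneous fields realising the same complete types over $\emptyset$, whence $K\cong L$ by back-and-forth.

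First I would control cardinality and transcendence degree. $K$ is countable because it is algebraic over $F_0(t_1,\ldots,t_n)$ for a transcendence basis $\bar t$, and the prime subfield $F_0$ is countable. For $L$: if $L$ contained $n{+}1$ algebraically independent elements $\bar b$, then $\tp_L(\bar b)$ would contain every formula $p(\bar y)\neq 0$ for $0\neq p\in\Z[\bar y]$, and by isotypicity this type would be realised in $K$, contradicting $\trdeg K=n$; conversely $\tp_K(\bar t)$ is realised in $L$, yielding $n$ algebraically independent elements of $L$. Hence $\trdeg L=n$ and $L$ is countable as well.

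The heart of the argument is $\omega$-homogeneity: any countable field of finite transcendence degree over its prime subfield is $\omega$-homogeneous, that is, any two finite tuples with the same type lie in the same automorphism orbit. Given $\bar a,\bar b\in K^k$ with $\tp_K(\bar a)=\tp_K(\bar b)$, the equality of types already produces a ring isomorphism $F_0(\bar a)\cong F_0(\bar b)$ matching coordinates, and one must extend this to an automorphism of $K$ in two stages: across the relative algebraic closure $F_0(\bar a)^{\mathrm{alg}}\cap K$, using that formulas of the form ``the polynomial $p(\bar a,y)$ has exactly $k$ roots in $K$'' belong to the common type and therefore allow Galois-theoretic matching of roots; and across the remaining $n-\trdeg F_0(\bar a)$ units of transcendence, where the finiteness of $\trdeg K$ is essential and one may appeal to the model-theoretic rigidity of finitely generated fields (in the spirit of Rumely--Pop--Scanlon) to guarantee that matching transcendentals can be chosen. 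The same statement applies to $L$ since it is also a countable field of transcendence degree $n$.

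With $\omega$-homogeneity of both $K$ and $L$ in hand, and the equal realisation of types over $\emptyset$ coming from isotypicity, a classical back-and-forth produces a type-preserving bijection $K\to L$, which is automatically a field isomorphism because the type of $(x,y,x+y)$ and of $(x,y,xy)$ is preserved. Hence $L\cong K$, and $K$ is defined by types. The single major obstacle is the $\omega$-homogeneity step, which is precisely where the finite transcendence degree hypothesis does all the work: it reduces the extension problem to finite algebraic towers over a finitely generated purely transcendental base, where Galois theory and the arithmetic rigidity of finitely generated fields can be brought to bear.
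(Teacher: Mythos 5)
Your overall architecture (both structures are countable, realise the same types over $\emptyset$, and are $\omega$-homogeneous, hence isomorphic by back-and-forth) is sound in principle, and your first step --- pinning down the transcendence degree of $L$ by transporting the type of a transcendence basis and the type of a hypothetical $(n{+}1)$-tuple of independent elements --- is exactly what the paper does. But the proof hinges entirely on the lemma that \emph{every} countable field of finite transcendence degree over its prime field is $\omega$-homogeneous, and that lemma is not established. The algebraic half of your sketch (matching roots using the formulas ``$p(\bar a,y)$ has exactly $k$ roots'') is essentially the right idea and can be made rigorous; it is the content of the root-counting lemma the paper cites (Fried--Jarden, Lemma 20.6.3(b)). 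The transcendental half is where the gap is: given $\tp(\bar a)=\tp(\bar b)$ and a new element $c$ transcendental over the relative algebraic closure of $F_0(\bar a)$, you must produce $d$ in the \emph{same} field with $\tp(\bar a,c)=\tp(\bar b,d)$. The cited ``rigidity of finitely generated fields'' results (Pop et al.) assert that elementarily equivalent finitely generated fields are isomorphic; they say nothing about realising a type over parameters inside a fixed ambient field, which is what you need here. As stated, this step is either circular (it presupposes the homogeneity being proved) or rests on a result that does not apply, and it is not even clear that the $\omega$-homogeneity claim is true at this level of generality.

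The paper avoids this difficulty entirely by a different decomposition: it uses isotypicity \emph{once}, at the very beginning, to transfer the type of a full transcendence basis $\bar a$ of $E$ to a tuple $\bar b$ of $F$, which is then forced to be a transcendence basis of $F$. Identifying $\bar a$ with $\bar b$, both fields become algebraic extensions of the common finitely generated field $K=k(\bar a)$, with the additional information that every $f\in K[x]$ has a root in $E$ iff it has a root in $F$ (this is part of the shared type of $\bar a$). The Fried--Jarden lemma then gives $E\simeq F$ directly. In other words, by matching the transcendental part first and all at once, the back-and-forth only ever has to run through algebraic extensions, where root-counting suffices; no homogeneity statement about transcendental elements is needed. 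If you want to salvage your route, you would need to either prove the $\omega$-homogeneity lemma (at least its transcendental extension step) or reorganise the argument so that, as in the paper, the transcendence basis is fixed before any extension of partial maps takes place.
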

\begin{proof}
	We will denote the transcendence degree of a field $K$ by $\trdeg K$.
	
	Let $E$ be a field of finite transcendence degree over a prime subfield, and $F$ be a field that is isotypic to $E$. Let us prove that $E\simeq F$.
	
	First note that $E$ and $F$ are elementary equivalent; hence they have the same characteristic, and hence the same prime subfield $k$. Let $\trdeg E=n$. Let $\ovl{a}=(a_1,\ldots, a_n)\in E^n$ be a transcendence basis of $E$. Then there exists $\ovl{b}=(b_1,\ldots,b_n)\in F^n$ such that $\tp_{E}(\ovl{a})=\tp_{F}(\ovl{b})$. Then the elements $b_i$ do not satisfy any nontrivial equation with integer coefficients, hence they are independent over $k$. Thus $\trdeg(F)\ge \trdeg (E)$. Similarly, if $F$ has $n+1$ algebraically independent elements, then so does $E$. Therefore, $\trdeg(F)\le \trdeg (E)$. Therefore, $\trdeg E=\trdeg F=n$; hence the elements $b_i$ form a transcendence basis of $F$.
	
	We may assume that $a_i=b_i$ for all $i$; so that $E$ and $F$ share the common subfield $K=k(a_1,\ldots,a_n)$, and both fields are algebraic over $K$. Since $\tp_{E}(\ovl{a})=\tp_{F}(\ovl{a})$, it follows that any polynomial $f\in K[x]$ has a root in $E$ if and only if it has a root in $F$ (existence of a root is an existential formula on $\ovl{a}$ with one quantifier). Hence by~\cite{FriedJarden}[Lemma 20.6.3 (b)], we have $E\simeq F$.
\end{proof}

\section{An example of countable isotypic but not isomorphic totally ordered sets.}
\label{OrdersSec}

In this Section we prove the following theorem.

\begin{thm} There exist two countable isotypic but not isomorphic totally ordered sets.\end{thm}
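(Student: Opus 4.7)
The plan is to exhibit two specific countable linear orders $L_1$ and $L_2$, verify that they realize the same set of finite-tuple types, and then distinguish them by an isomorphism invariant. Since isotypic structures are elementarily equivalent, $L_1$ and $L_2$ must be countable models of a common complete theory of linear orders. A natural source of candidates is the class of linear orders of the form $\sum_{k\in K}B_k$ for some index order $K$ and building blocks $B_k$: by the classical composition theorems for linear-order theories (in the Feferman--Vaught / Lauchli--Leonard tradition), the first-order behaviour of such a sum is controlled by the theory of the index $K$ together with the theories of the blocks, and the realized complete $n$-types are built up in a transparent way from the $n$-types of $K$ and the (very rigid) local block data.

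Concretely, I would first consider candidates of the form $L_i=\mathbb{Z}\cdot K_i$, where the $K_i$ are countable linear orders that are elementarily equivalent but not isomorphic (for example, two different models of $\mathrm{Th}(\mathbb{Z},<)$, obtained by concatenating $\mathbb{Z}$-chains along different countable index orders). Non-isomorphism of the resulting $L_i$ is easy: applying the ``finitely many elements between'' equivalence, one recovers $K_i$ as the condensation of $L_i$, which is an isomorphism invariant. Isotypy, however, requires the stronger assertion that $K_1$ and $K_2$ themselves realize the same finite-tuple types as linear orders, which reduces the problem to the same question one level down. To break this potential circularity I would attempt a direct ground-level construction, choosing $L_1$ as a ``rich'' countable linear order and $L_2$ as a modification of $L_1$ whose change is detectable only by some $L_{\omega_1\omega}$-invariant (a cofinal or coinitial pattern, or a condensation-rank witness), but not by any individual first-order type of a finite tuple.

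The main obstacle is the isotypy verification. Two linear orders can differ in very fine first-order type information through iterated Hausdorff condensations, endpoint behaviour, or cofinal and coinitial structure of infinite blocks, and the candidate pair $L_1,L_2$ must be chosen so as to eliminate every such distinguishing feature. I expect the bulk of the argument to be a careful combinatorial analysis showing that the two orderings are indistinguishable at the level of finite-tuple types: for every finite tuple $\bar a$ in $L_1$ one constructs, by back-and-forth on tuples, a matching tuple $\bar b$ in $L_2$ such that $\mathrm{Duplicator}$ wins the Ehrenfeucht--Fraisse game of arbitrary quantifier rank on $(L_1,\bar a)$ against $(L_2,\bar b)$, and symmetrically. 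This is where the subtle choice of $L_1$ and $L_2$ pays off: the asymmetry that witnesses non-isomorphism must lie strictly outside what any single finite tuple can see.
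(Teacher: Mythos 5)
Your candidate construction $L_i=\mathbb{Z}\cdot K_i$ (each point of $K_i$ blown up into a copy of $\mathbb{Z}$) is exactly the right one, and your non-isomorphism argument via condensation is correct; the paper takes $K_1=\mathbb{Z}$ and $K_2=\mathbb{Q}$. But the proposal has a genuine gap at the decisive step, and the reason you abandon this construction rests on a false premise. You claim that isotypy of $\mathbb{Z}\cdot K_1$ and $\mathbb{Z}\cdot K_2$ ``requires the stronger assertion that $K_1$ and $K_2$ themselves realize the same finite-tuple types,'' and conclude that the problem reduces circularly to itself one level down. This is not so. The complete type of a finite tuple in $\mathbb{Z}\cdot K$ is determined by far less than the type of its projection to $K$: after sorting the tuple, it is determined by which consecutive pairs are equal, which lie in the same $\mathbb{Z}$-block at a given finite distance, and which lie in distinct blocks (distance $\infty$). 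One proves this by an Ehrenfeucht--Fraisse argument: for each quantifier rank $N$, two tuples with the same distance data are $N$-equivalent as tuples in their respective orders, so Duplicator wins the length-$N$ game on the structures expanded by the tuples. Since every infinite $K$ yields tuples in $\mathbb{Z}\cdot K$ realizing every prescribed distance pattern in $\left(\mathbb{N}_0\cup\{\infty\}\right)^{n-1}$, any two such products over infinite index orders are isotypic --- no hypothesis on the types realized in $K_1$ and $K_2$ is needed beyond their being infinite.

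Having (wrongly) discarded the working construction, you propose instead an unspecified ``direct ground-level construction'' and yourself acknowledge that the bulk of the argument --- the back-and-forth verification --- remains to be done. As it stands the proposal therefore contains no completed isotypy proof for any concrete pair. The fix is to return to your first candidates and prove the distance-data lemma above; with $K_1=\mathbb{Z}$ and $K_2=\mathbb{Q}$ everything else in your outline goes through.
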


\smallskip

See Corollary~\ref{ExmpOrders} below for the concrete example.

\smallskip

First we need some preparation.

\begin{defi}
	Let $(A,<)$ be a totally ordered set. Let $x,y\in A$. Set
	\[
	\dist(x,y)=\begin{cases}
		0,\quad &\text{ if }x=y,\\
		|\{z\colon x<z<y\}|+1,\quad &\text{ if }x\ne y.
	\end{cases}
	\]
	So $\dist(x,y)$ is an element of the set $\N_0\cup\{\infty\}$. \end{defi}

For totally ordered sets $(A,<_A)$ and $(B,<_B)$ we denote by $(A\times B,<_{AB})$ the Cartesian product $A\times B$ with the lexicographic order
\[
(a,b)<_{AB}(a',b')\Leftrightarrow (a<a' \vee (a=a'\wedge b<b')).
\]

Let $(\Z,<_{\Z})$ be the set of integers with the usual order. Note that for a  totally ordered set $A$, if we consider the product $(A\times\Z,<_{A\Z})$, we have
\[
\dist((a,k),(a'k,))=\begin{cases}
	\infty,\quad &\text{ if }a\ne a',\\
	|k-k'|,\quad &\text{ if }a= a'.
\end{cases}
\]

\begin{prop} Let $(A,<_A)$, $(B,<_B)$ be totally ordered sets. Set $(A',<_{A'})=(A\times\Z,<_{A\Z})$ and $(B',<_{B'})=(B\times\Z,<_{B\Z})$. Let $\ovl{a}=(a_1,\ldots,a_n)\in (A')^n$ and $\ovl{b}=(b_1,\ldots,b_n)\in (B')^n$ be such that, firstly, $a_1\le_{A'}\ldots\le_{A'}a_n$, and $b_1\le_{B'}\ldots\le_{B'}b_n$; and secondly, for any $1\le i\le n-1$ we have $\dist_{A'}(a_i,a_{i+1})=\dist_{B'}(b_i,b_{i+1})$. Then $\tp_{A'}(\ovl{a})=\tp_{B'}(\ovl{b})$.\end{prop}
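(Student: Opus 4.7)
The plan is to prove the proposition by a back-and-forth / Ehrenfeucht--Fra\"iss\'e argument. The exact matching of consecutive distances in the hypothesis is too rigid to be preserved under one-step extension of the tuples: if Spoiler plays an element $a^*\in A'$ lying in a $\Z$-block of $A'$ strictly between the blocks of $a_i$ and $a_{i+1}$, but no block of $B$ lies strictly between the blocks of $b_i$ and $b_{i+1}$, then no $b^*\in B'$ can reproduce this exact block pattern. The idea is therefore to weaken the invariant, exploiting the fact that being sufficiently deep inside a $\Z$-block is indistinguishable, by first-order formulas of bounded quantifier depth, from being in a neighbouring block.

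For each $k\ge 0$, I declare two monotone tuples $\ovl{a}\in (A')^n$ and $\ovl{b}\in (B')^n$ to be \emph{$k$-close}, written $\ovl{a}\sim_k\ovl{b}$, if for every $1\le i\le n-1$ either $\dist_{A'}(a_i,a_{i+1})=\dist_{B'}(b_i,b_{i+1})$, or both of these distances are at least $2^k$, where $\infty$ is always counted as being at least $2^k$. The hypothesis of the proposition immediately gives $\ovl{a}\sim_k\ovl{b}$ for every $k$. By standard EF-game reasoning, to conclude $\tp_{A'}(\ovl{a})=\tp_{B'}(\ovl{b})$ it suffices to prove the following extension property: whenever $\ovl{a}\sim_k\ovl{b}$ with $k\ge 1$ and $a^*\in A'$, one can find $b^*\in B'$ such that, after re-sorting into monotone order, the extended tuples $(\ovl{a},a^*)$ and $(\ovl{b},b^*)$ satisfy $\sim_{k-1}$; and symmetrically with the roles of $A'$ and $B'$ interchanged.

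The extension property is proved by case analysis on the gap containing $a^*$. If $a^*$ lies in a gap $(a_i,a_{i+1})$ with $\dist_{A'}(a_i,a_{i+1})<2^k$, then $a_i,a^*,a_{i+1}$ share a single $\Z$-block; by $\sim_k$ the elements $b_i,b_{i+1}$ share a block with the same finite distance, and one can place $b^*$ inside that block at distance $\dist_{A'}(a_i,a^*)$ from $b_i$, matching both new distances exactly. If instead $\dist_{A'}(a_i,a_{i+1})\ge 2^k$, I split according to whether each of $\dist_{A'}(a_i,a^*)$ and $\dist_{A'}(a^*,a_{i+1})$ is less than $2^{k-1}$: any small specific distance is matched exactly by moving within the appropriate $\Z$-block of $B'$, while any requirement that a new distance be at least $2^{k-1}$ is met by placing $b^*$ deep inside a $\Z$-block, which is always possible because each $\Z$-block is bi-infinite. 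The inequality $2^{k-1}+2^{k-1}=2^k$ guarantees that when both new distances must be at least $2^{k-1}$ simultaneously, the gap in $B'$ is wide enough to realise them. The extremal gaps to the left of $a_1$ or to the right of $a_n$ are handled identically, using that $A'$ and $B'$ have no endpoints.

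The main obstacle is precisely the sub-case in which $a^*$ sits in a $\Z$-block of $A'$ not represented in the original tuple and, possibly, not represented in $B'$ at all: this is what forces the invariant to be weakened so as to allow both consecutive distances to be at least $2^k$ as an alternative to equality, and is what makes the $2^k$-threshold in the definition of $\sim_k$ unavoidable. Once the extension property is established, iterating it furnishes Duplicator with a winning strategy in the $k$-round EF game for every $k$; hence $\ovl{a}$ and $\ovl{b}$ satisfy the same first-order formulas of every quantifier rank, yielding $\tp_{A'}(\ovl{a})=\tp_{B'}(\ovl{b})$.
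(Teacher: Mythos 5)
Your proof is correct and takes essentially the same route as the paper: both arguments reduce the claim to Ehrenfeucht-Fraisse games on $A'$ and $B'$ and rest on the classical fact that tuples in chains whose consecutive distances are equal or both sufficiently large are $N$-equivalent. The only difference is that the paper imports this fact as a black box from \cite[Theorem 1.8]{Poizat}, whereas you reprove it from scratch via the $2^k$-threshold invariant and the halving/case analysis, so your write-up is self-contained where the paper's is a citation.
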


\begin{proof}
	To prove that $\tp_{A'}(\ovl{a})=\tp_{B'}(\ovl{b})$ is the same as prove that $(A',<_{A'},\ovl{a})$ and $(B',<_{B'},\ovl{b})$ are elementary equivalent in the language of ordered sets with $n$ constants (constant can be understand as an unary predicate indicating that given element is this constant). So we can do it by showing that for any $N\in\N$ in the corresponding Ehrenfeucht-Fraisse game of length $N$ the duplicator has a winning strategy.
	
	Fix the number $N$. By \cite[Theorem 1.8]{Poizat} the tuples $\ovl{a}$ and $\ovl{b}$ are $N$-equivalent (see definition in \cite[Section 1.1.]{Poizat}) in the language of ordered sets. Let $a_{n+k}\in A'$ and $b_{n+k}\in B'$ be the elements chosen at $k$-th turn of the game. The duplicator's winning strategy is to make choice in such a way that the tuples $(a_1,\ldots, a_{n+k})$ and $(b_1,\ldots, b_{n+k})$ be $(N-k)$-equivalent. He can do it by the definition of $N$-equivalence. In this case, in the end of the game the tuples $(a_1,\ldots, a_{n+N})$ and $(b_1,\ldots, b_{n+N})$ will be $0$-equivalent, i.e the order on them will be the same. In particular that means that $(a_{n+1},\ldots, a_{n+N})$ and $(b_{n+1},\ldots, b_{n+N})$ are in the same order and for any $1\le k\le n$ and $n+1\le l\le n+N$ we have $a_k=a_l\Leftrightarrow b_k=b_l$, which means that the duplicator won. 
\end{proof}

The proposition above allows us to construct many examples of isotypic totally ordered sets. Namely we have the following corollary.

\begin{cor} Let $(A,<_A)$ and $(B,<_B)$ be infinite totally ordered sets. Then the totally ordered sets $(A\times\Z,<_{A\Z})$ and $(B\times\Z,<_{B\Z})$ are isotypic.\end{cor}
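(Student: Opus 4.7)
The plan is to apply the preceding proposition. By symmetry between $A$ and $B$, it suffices to show that for any finite tuple $\ovl{a}\in(A')^n$ (where $A'=A\times\Z$ and $B'=B\times\Z$) there exists a tuple $\ovl{b}\in(B')^n$ with $\tp_{A'}(\ovl{a})=\tp_{B'}(\ovl{b})$. Since joint permutation of both tuples preserves the equality of types, I would first sort $\ovl{a}$ so that $a_1\le_{A'}\ldots\le_{A'}a_n$ and produce $\ovl{b}$ in the matching sorted order; at the very end I apply the inverse permutation to get back to the originally given ordering.

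Writing each $a_i=(\alpha_i,z_i)$ with $\alpha_i\in A$ and $z_i\in\Z$, the remark preceding the proposition tells me that $\dist_{A'}(a_i,a_{i+1})$ equals $|z_{i+1}-z_i|$ when $\alpha_i=\alpha_{i+1}$, and equals $\infty$ otherwise. The natural recipe for $\ovl{b}$ is therefore to replace the first coordinates while keeping the $\Z$-coordinates. Concretely, I enumerate the distinct values appearing among $\alpha_1,\ldots,\alpha_n$ as $\alpha'_1<_A\ldots<_A\alpha'_m$, and, using that $B$ is infinite, choose any $\beta'_1<_B\ldots<_B\beta'_m$ in $B$. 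Setting $b_i=(\beta'_{j(i)},z_i)$, where $j(i)$ is the unique index with $\alpha_i=\alpha'_{j(i)}$, I obtain $b_1\le_{B'}\ldots\le_{B'}b_n$ whose consecutive distances match those of $\ovl{a}$ case by case: both sides equal $|z_{i+1}-z_i|$ when $j(i)=j(i+1)$, and both sides are $\infty$ when $j(i)\ne j(i+1)$. The preceding proposition then yields $\tp_{A'}(\ovl{a})=\tp_{B'}(\ovl{b})$, and permuting back completes one direction of isotypicity; the reverse direction is identical.

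I do not expect any real obstacle. The entire argument hinges on the single observation that distances in $A\times\Z$ encode only two kinds of data: whether two sorted consecutive points share their $A$-coordinate, and, if so, the difference of their $\Z$-coordinates. An infinite $B$ provides more than enough room to reproduce the first kind of data, and the $\Z$-coordinates are transported verbatim so the second kind is preserved automatically.
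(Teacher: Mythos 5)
Your proposal is correct and is essentially the paper's argument made explicit: the paper's one-line proof simply invokes the fact that an infinite base order lets you realize any prescribed sequence of consecutive distances in $B\times\Z$, which is exactly what your construction (order-preserving replacement of the finitely many $A$-coordinates by elements of $B$, keeping the $\Z$-coordinates) accomplishes before applying the preceding proposition. The sorting/permuting-back step and the case check on distances are handled correctly.
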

\begin{proof}
	It follows from the fact that if a totally ordered set $(A,<_A)$ is infinite, then for every $(d_1,\ldots,d_{n-1})\in \left(\N_0\cup\{\infty\}\right)^{n-1}$ there exists $(a_1,\ldots,a_n)\in (A\times\Z)^n$ such that $\dist_{A\times\Z}(a_i,a_{i+1})=d_i$ for all $1\le i\le n-1$.
\end{proof}

\begin{rem}It is well known (see for example~\cite[Proposition~2.4.10]{MarkerBook}) that for any $(A,<_A)$ and $(B,<_B)$ the totally ordered sets $(A\times\Z,<_{A\Z})$ and $(B\times\Z,<_{B\Z})$ are elementary equivalent.\end{rem}

\smallskip

The following lemma shows that these isotypic totally ordered sets are not isomorphic to each other.

\begin{lem} Let $(A,<_A)$ and $(B,<_B)$ be totally ordered sets such that $(A\times\Z,<_{A\Z})\simeq (B\times\Z,<_{B\Z})$. Then $(A,<_A)\simeq (B,<_B)$.\end{lem}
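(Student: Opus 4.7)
My plan is to recover each factor $A$ (respectively $B$) as an explicit order-theoretic quotient of $A\times\Z$ (resp.\ $B\times\Z$) and then use the fact that any order isomorphism must descend to the quotients.

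\smallskip

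\textbf{Step 1: the ``finite distance'' equivalence relation.} For a totally ordered set $(X,<)$ define $x\sim y$ iff $\dist(x,y)<\infty$. This is reflexive and symmetric. For transitivity, assume $\dist(x,y)<\infty$ and $\dist(y,z)<\infty$; whichever of the six possible orderings of $x,y,z$ holds, the open interval between the two extreme elements of $\{x,y,z\}$ is contained in the union of the two given finite open intervals (together with $\{y\}$), and is therefore finite, so $x\sim z$.

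\textbf{Step 2: the quotient is linearly ordered.} Define $[x]<[y]$ iff $x<y$ and $x\not\sim y$. To check that this is well-defined and total, suppose $x\sim x'$, $y\sim y'$, and $[x]\ne[y]$. If $x<y$ but $x'>y$, then the open interval $(x,x')$ contains $y$, so the open interval $(x,y)$ is contained in the (finite) interval $(x,x')$; hence $x\sim y$, a contradiction. The symmetric case is the same. One then checks irreflexivity, antisymmetry and totality routinely.

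\textbf{Step 3: order isomorphisms descend to the quotient.} Any order isomorphism $\phi\colon X\to Y$ induces a bijection between the open interval $(x,y)$ and the open interval $(\phi(x),\phi(y))$; hence $\dist_X(x,y)=\dist_Y(\phi(x),\phi(y))$, and in particular $\phi$ preserves $\sim$. Therefore $\phi$ induces a well-defined order isomorphism $\bar\phi\colon X/{\sim}\to Y/{\sim}$.

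\textbf{Step 4: identify the quotient of $A\times\Z$.} In $(A\times\Z,<_{A\Z})$, the displayed distance formula just before the proposition gives $\dist((a,k),(a',k'))=|k-k'|$ when $a=a'$, and $\infty$ otherwise. Hence the $\sim$-classes are exactly the sets $\{a\}\times\Z$ for $a\in A$, and the map $[(a,k)]\mapsto a$ is an order isomorphism $(A\times\Z)/{\sim}\to (A,<_A)$. Similarly $(B\times\Z)/{\sim}\simeq(B,<_B)$. Composing with the isomorphism provided by Step~3 yields $(A,<_A)\simeq(B,<_B)$.

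\smallskip

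I expect no serious obstacle: the only point that requires a moment's care is the well-definedness of the order on the quotient (Step~2), and the key observation powering everything else is that an order isomorphism transports open intervals bijectively, so ``finitely many elements between'' is preserved even though it is not first-order expressible.
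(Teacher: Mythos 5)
Your proposal is correct and follows exactly the paper's argument: the paper also recovers $(A,<_A)$ as the quotient of $(A\times\Z,<_{A\Z})$ by the finite-distance equivalence relation, with the inherited order, and notes that an isomorphism of the products descends to the quotients. You have merely filled in the routine verifications that the paper leaves implicit.
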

\begin{proof}
	It follows from the fact that $(A,<_A)$ is the quotient of $(A\times\Z,<_{A\Z})$ by equivalence relation $a\sim a'\Leftrightarrow \dist(a,a')<\infty$; and this quotient inherits the order.
\end{proof}

Now we can construct our example.

\begin{cor} \label{ExmpOrders} The pair $(\Z\times\Z,<_{\Z\Z})$ and $(\Q\times\Z,<_{\Q\Z})$ is an example of countable isotypic but not isomorphic totally ordered set.\end{cor}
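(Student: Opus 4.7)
The plan is to assemble the corollary directly from the three pieces of machinery built in the section. First I would observe that both $\Z\times\Z$ and $\Q\times\Z$ are countable, being Cartesian products of two countable sets, so the underlying sets meet the size requirement without further work.

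Next, I would invoke the preceding corollary with $A=\Z$ and $B=\Q$. Since both $(\Z,<)$ and $(\Q,<)$ are infinite totally ordered sets, that corollary immediately yields that $(\Z\times\Z,<_{\Z\Z})$ and $(\Q\times\Z,<_{\Q\Z})$ are isotypic. This step is essentially a direct citation and requires no further argument.

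Finally, to show that these two ordered sets are not isomorphic, I would argue by contradiction using the preceding lemma. Suppose $(\Z\times\Z,<_{\Z\Z})\simeq(\Q\times\Z,<_{\Q\Z})$. The lemma then forces $(\Z,<_\Z)\simeq(\Q,<_\Q)$ as totally ordered sets. But this is plainly false: $\Q$ is densely ordered (between any two distinct rationals lies another rational), whereas $\Z$ has no such density (consecutive integers have no integer strictly between them); equivalently, every element of $\Z$ has an immediate successor while no element of $\Q$ does. This contradiction shows the two ordered sets cannot be isomorphic.

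The argument has no real obstacle — everything has been set up in the preceding propositions, corollary, and lemma, and the only genuine content here is the well-known observation that $\Z$ and $\Q$ are not order-isomorphic. Thus the corollary is a three-line deduction, and I would present it in exactly that form.
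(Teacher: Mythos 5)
Your proof is correct and follows exactly the route the paper intends: countability is immediate, isotypicity comes from the preceding corollary applied to the infinite ordered sets $\Z$ and $\Q$, and non-isomorphism comes from the preceding lemma together with the standard fact that $(\Z,<)\not\simeq(\Q,<)$ (density versus discreteness). The paper leaves this assembly implicit, and your write-up supplies precisely the missing three-line deduction.
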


\section{Examples of countable isotypic but not isomorphic fields and groups.}
\label{FieldsGroupsSec}

In this Section we prove the following theorems.

\begin{thm} There exist two countable isotypic but not isomorphic rings.\end{thm}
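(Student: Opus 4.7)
The plan is to mirror Section \ref{OrdersSec}: first build countable isotypic but not isomorphic ordered abelian groups, then attach a ring to each in such a way that the input isotypicity survives and the output non-isomorphism is forced.

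First I would produce ordered abelian groups $\Gamma_1$ and $\Gamma_2$, isotypic but not isomorphic. A natural candidate is $\Gamma_1=\Z\oplus\Z$ and $\Gamma_2=\Q\oplus\Z$ with the lexicographic order and pointwise addition; an Ehrenfeucht--Fra\"iss\'e argument parallel to the one run in Section \ref{OrdersSec}, now played in the language of ordered abelian groups, should yield isotypicity, while the quotient by the convex subgroup $\{0\}\oplus\Z$ separates them (giving $\Z$ on one side and $\Q$ on the other). Next I would take $R_i$ to be the valuation ring of a countable equicharacteristic henselian valued field with value group $\Gamma_i$ and residue field $\F_p$, for instance the henselization of the fraction field of the group ring $\F_p[t^{\Gamma_i}]$ viewed inside the Hahn series field $\F_p((t^{\Gamma_i}))$; since $\Gamma_i$ is countable, so is $R_i$. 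Because the value group is interpretable in $R_i$ as $\mathrm{Frac}(R_i)^*/R_i^*$ ordered by divisibility, any ring isomorphism $R_1\simeq R_2$ induces $\Gamma_1\simeq \Gamma_2$, which has been excluded.

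The heart of the argument is the transfer of isotypicity from the value groups to the rings. For this I would invoke an Ax--Kochen--Ershov-style decomposition: in equicharacteristic henselian valuation rings with a fixed residue field, the type of a tuple is determined by the combined data of its residue projection, its value-group projection, and leading-term (angular component) information. Given $\bar r \in R_1^n$ one forms $\bar\gamma = v(\bar r)\in\Gamma_1^n$, uses isotypicity of the value groups to pick $\bar\delta\in\Gamma_2^n$ with $\tp_{\Gamma_2}(\bar\delta)=\tp_{\Gamma_1}(\bar\gamma)$, lifts $\bar\delta$ to $\bar s\in R_2^n$ with matching residues and leading terms, and then verifies $\tp_{R_1}(\bar r)=\tp_{R_2}(\bar s)$ via quantifier elimination in a suitable expanded language.

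The hard part will be making this transfer precise. Standard Ax--Kochen--Ershov delivers only elementary equivalence, whereas isotypicity demands a tuple-by-tuple type match. Upgrading from the former to the latter requires either a clean quantifier elimination in a language enriched with a leading-term sort $R_i^\times/(1+\mathfrak{m}_i)$, or a direct back-and-forth between $R_1$ and $R_2$ simultaneously respecting valuation, residue, and leading-term data; constructing and verifying such a back-and-forth is where the real work sits.
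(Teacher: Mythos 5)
Your overall architecture matches the paper's (an order-type invariant fed into a henselian valued field, non-isomorphism via recovering the value group, isotypicity via a Pas/AKE-style relative quantifier elimination), but two of your concrete choices break the argument. First, $\Z\oplus\Z$ and $\Q\oplus\Z$ with the lexicographic order are not isotypic --- they are not even elementarily equivalent as ordered abelian groups. Both have a minimal positive element $\mathbf{1}$, and the sentence $\forall x\,\exists y\,(x=2y\vee x=2y+\mathbf{1})$ holds in $\Q\oplus\Z$ (divide the first coordinate by $2$ in $\Q$) but fails for $(1,0)$ in $\Z\oplus\Z$. This is exactly why the paper takes \emph{divisible} value groups $\Gamma_A=\bigoplus_{a\in A}\Q$ indexed by the isotypic-but-not-isomorphic ordered sets of Section~\ref{OrdersSec}, and recovers $A$ from $\Gamma_A$ as its ordered set of Archimedean classes rather than as a quotient by a convex subgroup. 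Second, choosing residue field $\F_p$ puts you in equicharacteristic $p$, where no Pas-type quantifier elimination or AKE transfer is available for general henselian fields (this is a well-known open problem; the tame-field results do not apply since $\Z\oplus\Z$ is not $p$-divisible). The paper works in residue characteristic zero precisely so that Pas's theorem applies.

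There is also a gap in the transfer step even where QE holds: the type of $\bar r$ is not determined by the values, residues, and leading terms of the entries $r_1,\dots,r_n$ alone --- the Pas decomposition requires control of $v(f(\bar r))$ and $\ac(g(\bar r))$ for \emph{all} polynomial terms $f,g$. Your ``lift $\bar\delta$ to $\bar s$ with matching residues and leading terms'' does not secure this. The paper's mechanism is different and is what makes the verification go through: each tuple $\bar\xi\in K_A^n$ involves only finitely many generators $x_{a_1},\dots,x_{a_N}$, and the substitution $x_{a_k}\mapsto x_{b_k}$ gives a valued-field isomorphism between the substructures generated by $\bar\xi$ and $\bar\zeta$, so all quantifier-free ring data, all valuations of terms (landing in a divisible ordered abelian group, which itself has QE), and all angular components of terms automatically agree. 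If you replace your value groups by $\bigoplus_\Z\Q$ and $\bigoplus_\Q\Q$, your residue field by $\Q$, and your tuple-matching by such a substructure isomorphism, your outline becomes essentially the paper's proof.
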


\smallskip

See Corollary~\ref{ExmpFields} below for the concrete example.

\begin{thm} There exist two countable isotypic but not isomorphic groups.\end{thm}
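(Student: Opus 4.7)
The natural plan is to bootstrap the countable isotypic but non-isomorphic fields $F_1, F_2$ supplied by Corollary~\ref{ExmpFields} into a group example via a uniform first-order interpretation. Fix an integer $n \ge 3$ and set $G_i = \SL_n(F_i)$; both groups are countable since the $F_i$ are.

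For isotypicity, I would use that $\SL_n$ is uniformly interpretable in the theory of fields: an element of $\SL_n(F)$ is an $n^2$-tuple of field elements subject to the polynomial relation $\det = 1$, and the group multiplication is given by polynomials. Consequently every $k$-tuple $\bar g \in G_1^k$ is encoded by a tuple $\bar x \in F_1^{kn^2}$, and the type of $\bar g$ in the group language is uniformly determined by $\tp_{F_1}(\bar x)$. Realizing this field-type in $F_2$ by isotypicity of $F_1$ and $F_2$ yields a tuple $\bar y \in F_2^{kn^2}$ that still satisfies the determinant conditions (since these conditions are part of the type), and hence assembles into $\bar h \in G_2^k$ with $\tp_{G_1}(\bar g) = \tp_{G_2}(\bar h)$. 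The reverse direction is symmetric, so the two groups are isotypic.

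For non-isomorphism, I would invoke the classical result that for $n \ge 3$ the ground field $F$ is first-order interpretable in $\SL_n(F)$, so any abstract isomorphism $G_1 \simeq G_2$ descends to a field isomorphism $F_1 \simeq F_2$. This is a consequence of the Borel--Tits theorem on abstract homomorphisms of simple algebraic groups, and is also made explicit in interpretability results for Chevalley groups. Since $F_1 \not\simeq F_2$ by Corollary~\ref{ExmpFields}, we conclude that $G_1 \not\simeq G_2$.

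The main obstacle is verifying that the field-recovery step applies cleanly to the particular pair $F_1, F_2$ produced in the previous section; depending on their characteristic and perfectness, one might need to adjust $n$, switch to a different Chevalley group, or instead interpret $F_i$ into a more elementary group, for example a Heisenberg-type nilpotent group or a semidirect product $F_i^+ \rtimes F_i^\times$, in which the recovery of the field is visibly given by existential formulas. The transfer of isotypicity in the first step works verbatim for any such uniformly field-interpretable group, so the flexibility lies entirely in the choice of auxiliary group construction.
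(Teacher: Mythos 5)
Your proposal is correct and follows essentially the same route as the paper: the paper takes $\SL(3,K_{\Z})$ and $\SL(3,K_{\Q})$, transfers isotypicity from the fields via the polynomial interpretation of $\SL_3$, and derives non-isomorphism of the groups from non-isomorphism of the fields by citing Bunina's theorem on isomorphisms of Chevalley groups over commutative rings (your appeal to Borel--Tits serves the same purpose, and works here since the fields in question are infinite of characteristic zero).
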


\smallskip

See Corollary~\ref{ExmpGroups} below for the concrete example.

\smallskip

Here is how our construction goes.

\begin{defi}For a totally ordered set $(A,<_A)$ we define the ordered abelian group $\Gamma_A$ as $ \bigoplus_{a\in A}(\Q,+)$ with the lexicographic order (the positive elements are those with the highest non-zero component being positive). Then we construct the valued field $K_A$ as follows: take the field $\Q(x_a\colon a\in A)$, where $x_a$ are algebraically independent variables; add all the rational powers of the variables $x_a$; define the non-archimedean valuation $v$ with values in $\Gamma_A$ to be zero on $\Q\sm\{0\}$ and such that for all $a\in A$ and $q\in \Q$ the valuation $v(x_a^q)$ has $q$ in the component that correspond to $a$ and zero in all the other components; define $K_A$ to be the henselization of this field with respect to the valuation $v$, with the natural extension of that valuation (for the notion of henselization see, for example,~\cite[Section 5.2]{EnglerPrestel}).   
 \end{defi}

\begin{prop} Let $(A,<_A)$, $(B,<_B)$ be infinite totally ordered sets. Then the fields $K_A$ and $K_B$ are isotypic.\end{prop}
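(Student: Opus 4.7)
My plan is to combine a finite-support reduction with the Ax--Kochen--Ershov transfer principle for henselian valued fields of residue characteristic zero.

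Given a tuple $\bar c=(c_1,\ldots,c_n)\in K_A^n$, each $c_i$ is algebraic (in the henselian sense) over a finitely generated subfield $\Q(x_a^q\colon a\in S_i,\ q\in\Q)$ for some finite $S_i\sub A$. After enlarging, the whole tuple lies in the henselization $L_A\sub K_A$ of $\Q(x_{a_1}^q,\ldots,x_{a_k}^q\colon q\in\Q)$, where $a_1<_A\cdots<_A a_k$ is a finite chain in $A$. Since $B$ is infinite I may choose any chain $b_1<_B\cdots<_B b_k$ in $B$ and form the analogous henselian sub-field $L_B\sub K_B$. The formal assignment $x_{a_i}^q\mapsto x_{b_i}^q$ is a valuation-preserving isomorphism of the two ``pre-henselization'' subfields, and extends uniquely to an isomorphism $\sigma\colon L_A\to L_B$ of henselian valued fields by the universal property of henselization. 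Set $\bar d:=\sigma(\bar c)\in L_B^n\sub K_B^n$. By the symmetry of this construction in $A$ and $B$, it remains to prove the type equality $\tp_{K_A}(\bar c)=\tp_{K_B}(\bar d)$.

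For this I would appeal to a version of the Ax--Kochen--Ershov principle for types in equicharacteristic-zero henselian valued fields, for instance via Pas's quantifier elimination in the three-sorted Denef--Pas language, or a Basarab/F.-V.~Kuhlmann style relative QE. Both $K_A$ and $K_B$ have residue field $\Q$, and their value groups $\Gamma_A$, $\Gamma_B$ are non-trivial divisible ordered abelian groups, which lie in the complete theory DOAG and admit quantifier elimination in the language of ordered abelian groups. Since $\sigma$ identifies the quantifier-free types of $v(\bar c)$ and $v(\bar d)$ over $\emptyset$, QE in DOAG forces $\tp_{\Gamma_A}(v(\bar c))=\tp_{\Gamma_B}(v(\bar d))$; likewise the residues of $\bar c$ and $\bar d$ match in the common residue field $\Q$. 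The AKE-for-types transfer then yields $\tp_{K_A}(\bar c)=\tp_{K_B}(\bar d)$ in the valued-field language, and \emph{a fortiori} in the pure field language, as required.

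The main obstacle I anticipate is the last step: extracting from the literature a sufficiently strong ``AKE for types'' statement (as opposed to the more standard elementary-equivalence version) that allows one to transport a type along an isomorphism of finitely generated henselian sub-fields. The finite-support reduction and the QE for DOAG are essentially bookkeeping; the precise formulation of the transfer theorem is where the real care will be needed.
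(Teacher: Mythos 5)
Your proposal is correct and follows essentially the same route as the paper: a finite-support reduction producing a corresponding tuple in $K_B$ via the variable substitution $x_{a_i}\mapsto x_{b_i}$, followed by Pas's relative quantifier elimination for equicharacteristic-zero henselian fields in the three-sorted language (plus QE for divisible ordered abelian groups) to transfer the type. The only point the paper makes explicit that you leave implicit is the angular component map, which it defines via lowest coefficients of power series so that your isomorphism $\sigma$ is compatible with it.
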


\begin{proof}
	Take $\ovl{\xi}=(\xi_1,\ldots,\xi_n)\in K_A^n$ and let us prove that there exists $\ovl{\zeta}=(\zeta_1,\ldots,\zeta_n)\in K_B^n$ such that $\tp_{K_A}(\ovl{\xi})=\tp_{K_B}(\ovl{\zeta})$.
	
	Each element $\xi_i$ is a root of some polynomial over $\Q(x_a\colon a\in A)$. Each of these polynomials has finitely many coefficients, and for each coefficient there are finitely many elements $a\in A$ such that the variable $x_a$ is involved. Let $S=(a_1,\ldots,a_N)\sub A$ be the set of all these elements, and let $a_1<\ldots<a_N$.
	
	The field $K_A$ has a unique valuation preserving embedding in the completion of the field $\Q(x_a^q\colon a\in A,\; q\in\Q)$. Thus each of the elements $\xi_i$ can be presented as a power series in $x_a^q$ and clearly those series involve only $x_a^q$ with $a\in S$. Now choose elements $b_1,\ldots,b_N\in B$ such that $b_1<\ldots<b_N$. Then define $\zeta_i$ to be the same power series as $\xi_i$ with each $x_{a_k}$ replaced by $x_{b_k}$. Note that $\zeta_i$ are algebraic over $\Q(x_b\colon b\in B)$ and hence belong to $K_B$, because the residue field has characteristic zero and hence by~\cite[Theorem 4.1.10]{EnglerPrestel} the henselization is algebraically maximal. 
	
	Now let us prove that $\tp_{K_A}(\ovl{\xi})=\tp_{K_B}(\ovl{\zeta})$. Consider the triples $(K_A,\Gamma_A,\Q)$ and $(K_B,\Gamma_B,\Q)$ as structures in the three-sorted Pas language: it contains ring operations for the first and third sorts, group operation and order relation for the second sort, the valuation map $v$ from the first sort to the second one, and the angular component map $\ac$ from the first sort to the third one. We define the angular component map for $K_A$ and $K_B$ as the map that takes the lowest coefficient in the corresponding power series.
	
	Now we prove an even stronger statement: any first order formula $P(u_1,\ldots,u_n)$ in the Pas language in $n$ variables from the first sort has the same value on $\ovl{\xi}$ and $\ovl{\zeta}$. It follows from~\cite{Pas} that in the theory of henselian valued fields with angular components where the residue field has characteristic zero any formula is equivalent to a boolean combination of formulas of the following types:
	
	$\bullet$  $\ph(\ovl{u})$, where $\ph$ is a quantifier free formula in the language of rings;
	
	$\bullet$ $\psi(v(f_1(\ovl{u})), . . . , v(f_k(\ovl{u})))$, where $\psi$ is a formula in the language of ordered groups and $f_i$ are terms in the ring language.
	
	$\bullet$ $\theta(\ac(g_1(\ovl{u})), . . . , \ac(g_k(\ovl{u})))$, where $\theta$ is a formula in the ring language and $g_i$ are terms in the ring language.

	Moreover, by~\cite[Corollary 3.1.17 ]{MarkerBook} the theory of ordered divisible abelian groups has quantifier elimination; hence in the second item $\psi$ can be taken quantifier free. Now the first two types of formulas have the same value on $\ovl{\xi}$ and $\ovl{\zeta}$ because $\ovl{\xi}$ and $\ovl{\zeta}$ lie in the isomorphic substructures, with isomorphism that takes $\ovl{\xi}$ to $\ovl{\zeta}$ and the formulas are quantifier free. The third type gives the same value because the corresponded angular components are equal.
\end{proof}

\begin{prop} Let $(A,<_A)$ and $(B,<_B)$ be totally ordered sets such that $K_A\simeq K_B$ (as fields). Then $(A,<_A)\simeq (B,<_B)$.\end{prop}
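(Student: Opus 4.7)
The plan is to recover $(A,<_A)$ from the abstract field $K_A$ in two stages: first reconstruct the valuation $v$ (up to equivalence) as a canonical invariant of $K_A$, then recover $(A,<_A)$ from the value group $\Gamma_A$.

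The key claim is that $v$ is the unique finest henselian valuation on $K_A$; this is where the main work lies. First, $K_A$ is not separably closed: we are in characteristic $0$, so separably closed would mean algebraically closed, which would force the residue field of $v$ to be algebraically closed -- but it is $\Q$. Second, any henselian $w$ strictly refining $v$ would induce a non-trivial henselian valuation on the residue field $\Q$, which is impossible since $\Q$ admits no non-trivial henselian valuation (Hensel's lemma fails, e.g., for $X^2-(1+p)$ over $\Q$ with the $p$-adic valuation). Third, suppose $w$ is a henselian valuation on $K_A$ incomparable to $v$: the F.K. Schmidt theorem (see \cite[Theorem 4.4.1]{EnglerPrestel}) forces $v$ and $w$ to be dependent, so the join $u=v\vee w$ is a non-trivial valuation strictly coarsening both, and on $\bar K_u$ the induced valuations $\bar v,\bar w$ are non-trivial, henselian, and independent; a second application of F.K. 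Schmidt on $\bar K_u$ then yields that $\bar K_u$ is separably (hence algebraically) closed, which forces the residue field of $\bar v$ inside $\bar K_u$, namely $\Q$, to be algebraically closed as well -- contradiction. So $v$ is the finest henselian valuation on $K_A$.

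Once $v$ is recovered as an intrinsic invariant, any field isomorphism $K_A\simeq K_B$ sends $v_A$ to a valuation equivalent to $v_B$, producing an isomorphism $\Gamma_A\simeq\Gamma_B$ of ordered abelian groups. To recover $(A,<_A)$ from $\Gamma_A$, I would use the archimedean equivalence relation: positive $\gamma,\delta\in\Gamma_A$ are equivalent if $n\gamma\ge\delta$ and $n\delta\ge\gamma$ for some $n\in\N$, and the quotient inherits a strict linear order from $\Gamma_A$. On $\Gamma_A=\bigoplus_{a\in A}(\Q,+)$ with the lexicographic order, the archimedean class of a positive element is determined by the greatest $a\in A$ appearing in its support, and this identification gives an order-isomorphism onto $(A,<_A)$.

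The main obstacle is the first stage, where one must combine F.K. Schmidt with the absence of non-trivial henselian valuations on $\Q$ (and the characteristic-zero identification of separably closed with algebraically closed) to pin down $v$ as canonical. Once that is done, the transport of $v$ under a field isomorphism and the purely order-theoretic reconstruction of $A$ from $\Gamma_A$ are formal.
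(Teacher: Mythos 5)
Your argument is correct and follows the same overall decomposition as the paper: make the valuation on $K_A$ canonical, transport it through the field isomorphism to get $\Gamma_A\simeq\Gamma_B$ as ordered groups, and recover $(A,<_A)$ as the ordered set of Archimedean classes of $\Gamma_A$. The two proofs differ only in how canonicity is established. The paper considers just the two henselian valuation rings at hand (that of $v_A$ and the pullback of $v_B$), both with residue field $\Q$, cites \cite[Theorem 4.4.2]{EnglerPrestel} to get that they are comparable, say $\O_1\subseteq\O_2$ with $\M_2\subseteq\M_1$, and then concludes equality by noting that $\O_1/\M_2$ is a valuation ring of $\Q$ with residue field $\Q$, hence all of $\Q$. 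You instead pin down $v$ as the finest henselian valuation of $K_A$: you rule out henselian refinements because $\Q$ carries no non-trivial henselian valuation, and you rule out incomparable henselian valuations by essentially re-deriving Theorem 4.4.2 from F.K.\ Schmidt (Theorem 4.4.1) via the finest common coarsening. Both routes rest on the same input from \cite{EnglerPrestel}; yours proves slightly more (uniqueness of $v$ among \emph{all} henselian valuations, needing in addition the fact that a henselian refinement induces a henselian valuation on the residue field), while the paper's quotient-ring trick is shorter and only needs that $\Q$ admits no proper valuation ring with residue field of characteristic $0$. One trivial quibble: your witness $X^2-(1+p)$ for the failure of Hensel's lemma on $(\Q,v_p)$ breaks down at $p=3$, where $1+p=4$ is a square (and at $p=2$, where the reduction has no simple root); any unit that is a square modulo $p$ but not a square in $\Q$ does the job.
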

\begin{proof}
	First let us prove that the isomorphism must preserve the valuation ring. Assume it does not. Then the field $K_A$ has two different valuation rings $\O_1$ and $\O_2$ with maximal ideals $\M_1$ and $\M_2$, such that both fraction fields are $\Q$ and $K_A$ is henselian with respect to both valuation. It follows then by \cite[Theorem 4.4.2]{EnglerPrestel} that $\O_1$ and $\O_2$ are comparable, i.e. one ring is contained in the other and the maximal ideals are included in the opposite direction: say, $\O_1\le \O_2$ and $\M_2\le\M_1$. Therefore, the quotient ring $\O_1/\M_2$ has both fraction field and residue field isomorphic to $\Q$, which is only possible if $\O_1/\M_2\simeq \Q$, i.e. $\O_1=\O_2$.
	
	Now for a valued field $K$ with the valuation ring $\O$ one can recover the valuation group as $K^*/\O^*$ and the order can be recovered as $[\xi_1]<[\xi_2]\Leftrightarrow \xi_2/\xi_1\in\O$. Therefore, $\Gamma_A$ and $\Gamma_B$ must be isomorphic as ordered groups.
	
	Given an ordered abelian group $\Gamma$ and elements $g$,$h\in\Gamma$, we write $g\sim h$ if and only if either $|h|\le |g|$ and for some natural number $n$ we have $|g|\le n|h|$; or $|g|\le |h|$ and for some natural number $n$ we have $|h|\le n|g|$, where $|g|=\max\{g,-g\}$. An Archimedean class of $\Gamma$ is an equivalence class of $\sim$. The set of Archimedean classes inherits the order from $\Gamma$.
	
	Note that the sets of Archimedean classes of $\Gamma_A$ and $\Gamma_B$ are precisely $A$ and $B$. Therefore, we have $(A,<_A)\simeq (B,<_B)$.
\end{proof}

\begin{cor}\label{ExmpFields} The pair $K_{\Z}$ and $K_{\Q}$ is an example of countable isotypic but not isomorphic fields.\end{cor}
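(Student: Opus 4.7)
The plan is to assemble this as a direct application of the two preceding propositions together with a countability check.

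First I would verify isotypicity: since both $\Z$ and $\Q$ are infinite totally ordered sets, the proposition above gives immediately that $K_\Z$ and $K_\Q$ are isotypic. Next, for non-isomorphism, I would argue by contraposition using the second proposition: if $K_\Z \simeq K_\Q$ as fields, then $(\Z,<_\Z) \simeq (\Q,<_\Q)$ as totally ordered sets. But this is absurd, for example because $(\Z,<_\Z)$ has consecutive elements (every element has an immediate successor) while $(\Q,<_\Q)$ is dense. Hence $K_\Z \not\simeq K_\Q$.

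The remaining task is the countability of $K_A$ for $A$ countable. I would note that $\Q(x_a\colon a\in A)$ is countable since it is generated over $\Q$ by a countable set; adjoining rational powers $x_a^q$ for $q\in\Q$ and $a\in A$ keeps the generating set countable; and henselization, being an algebraic extension (in fact the smallest henselian algebraic extension inside a fixed algebraic closure, see~\cite[Section 5.2]{EnglerPrestel}), preserves countability. Applying this to $A=\Z$ and $A=\Q$ gives that $K_\Z$ and $K_\Q$ are countable.

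There is essentially no obstacle here — the propositions do all the work — so this corollary is just a matter of checking the three properties (countable, isotypic, not isomorphic) on the concrete pair $(\Z,\Q)$. The only mild point one might want to spell out is why $\Z$ and $\Q$ are not order-isomorphic, which is the observation about discreteness versus density above.
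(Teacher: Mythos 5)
Your proof is correct and matches the paper's intent exactly: the corollary is stated there without proof precisely because it is the immediate combination of the two propositions, plus the standard facts that $\Z$ and $\Q$ are not order-isomorphic and that $K_A$ is countable for countable $A$. Your explicit countability check (countably generated field, countably many rational powers, henselization is an algebraic extension) is a reasonable bit of diligence the paper leaves implicit.
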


\begin{cor}\label{ExmpGroups} The pair $\SL(3,K_{\Z})$ and $\SL(3,K_{\Q})$ is an example of countable isotypic but not isomorphic groups. \end{cor}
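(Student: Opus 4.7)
The plan is to combine countability of the fields $K_{\Z}$ and $K_{\Q}$ with two structural ingredients: $\SL(3,K)$ is uniformly interpretable in $K$, and conversely, for $n\ge 3$ the abstract group $\SL(n,K)$ over an infinite field determines $K$ up to isomorphism. Countability is immediate: $K_{\Z}$ and $K_{\Q}$ are countable (built from countably many generators, closed under rational powers, and henselization preserves cardinality), so $\SL(3,K_{\Z})$ and $\SL(3,K_{\Q})$ are countable as well.

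For isotypicity, I would exploit the uniform interpretation of $\SL(3,K)$ in $K$: an element of the group is a $9$-tuple of field elements satisfying the polynomial condition $\det=1$, and multiplication and inversion are given by fixed polynomial (resp.\ rational) expressions in the entries. Any first-order group formula $\varphi(x_1,\dots,x_k)$ then translates into a field formula $\varphi^{*}(\bar y_1,\dots,\bar y_k)$ with each $\bar y_i$ a $9$-tuple, and each group quantifier $\exists x_i$ replaced by $\exists \bar y_i$ relativised to $\det(\bar y_i)=1$. Given $\bar g=(g_1,\dots,g_k)\in \SL(3,K_{\Z})^k$, I would expand it to the $9k$-tuple $\bar g^{*}\in K_{\Z}^{9k}$ of matrix entries; by Corollary~\ref{ExmpFields} there exists $\bar h^{*}\in K_{\Q}^{9k}$ with $\tp_{K_{\Z}}(\bar g^{*})=\tp_{K_{\Q}}(\bar h^{*})$; since $\det=1$ is part of the field type, $\bar h^{*}$ assembles into $\bar h\in \SL(3,K_{\Q})^k$, and the translation $\varphi\mapsto\varphi^{*}$ gives $\tp_{\SL(3,K_{\Z})}(\bar g)=\tp_{\SL(3,K_{\Q})}(\bar h)$. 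The opposite direction is symmetric, so the two groups are isotypic.

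For non-isomorphism, I would invoke the classical rigidity theorem for $\SL_n$ over infinite fields with $n\ge 3$ (going back to Schreier--van der Waerden for $\mathrm{PSL}_n$, and extended by O'Meara, Borel--Tits and Steinberg): every abstract group isomorphism $\SL(n,K)\to \SL(n,L)$ is standard, and in particular induces a field isomorphism $K\simeq L$ (possibly composed with the inverse-transpose involution, which is immaterial for the conclusion $K\simeq L$). Combined with $K_{\Z}\not\simeq K_{\Q}$ from Corollary~\ref{ExmpFields}, this gives $\SL(3,K_{\Z})\not\simeq \SL(3,K_{\Q})$.

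The main obstacle is the non-isomorphism step: one must locate and apply a version of the rigidity theorem whose hypotheses are verified by the specific henselian fields $K_{\Z}$ and $K_{\Q}$ (certainly infinite, of characteristic zero, with no finiteness issues). The isotypicity step is, in contrast, a conceptually routine interpretation argument, but should be written out carefully to make transparent that group quantifiers are faithfully captured by field quantifiers restricted to the $\det=1$ locus.
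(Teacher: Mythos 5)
Your proposal is correct and follows essentially the same route as the paper: isotypicity of the groups is deduced from isotypicity of the fields via the standard interpretation of $\SL(3,K)$ in $K$ (which the paper dismisses as ``it easily follows''), and non-isomorphism is deduced from $K_{\Z}\not\simeq K_{\Q}$ via a rigidity theorem for $\SL_n$, $n\ge 3$. The only difference is the reference for rigidity: you cite the classical Schreier--van der Waerden/O'Meara/Borel--Tits line for $\SL_n$ over infinite fields, while the paper cites Bunina's result on isomorphisms of Chevalley groups over commutative rings; both apply here, since $K_{\Z}$ and $K_{\Q}$ are infinite fields of characteristic zero.
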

\begin{proof}
	Since the fields $K_{\Z}$ and $K_{\Q}$ are isotypic, it easily follows that the groups $\SL(3,K_{\Z})$ and $\SL(3,K_{\Q})$ are isotypic. Further since the fields $K_{\Z}$ and $K_{\Q}$ are not isomorphic, it follows from \cite{Bunina_recent} that the groups $\SL(3,K_{\Z})$ and $\SL(3,K_{\Q})$ are not isomorphic.
\end{proof}
\smallskip

%\bibliographystyle{plain}
%\bibliography{english}

\end{document}